\documentclass[12pt]{article}

\usepackage{authblk} 
\usepackage[english]{babel}

\usepackage[a4paper,margin=1in]{geometry}
\usepackage{amsmath,amssymb,amsthm,bm,mathabx}
\usepackage{mathtools}
\usepackage{hyperref, color}
\usepackage{comment}

\newcommand{\PP}{\mathbb{P}}
\newcommand{\RR}{\mathbb{R}}
\newcommand{\EE}{\mathbb{E}}

\newtheorem{theorem}{Theorem}[section]	
\newtheorem{lemma}{Lemma}[section]
\newtheorem{corollary}{Corollary}[section]

\newcommand\td{\overset{d}{\to}}

\begin{document}

\title{Joint Distributions of Minimum and Maximum Angles on High-Dimensional Spheres}
\author[]{Yongcheng Qi}
\author[]{Lijian Yang}
 
\affil{University of Minnesota Duluth and Tsinghua University}

\date{}

\maketitle

\begin{abstract} Consider $n$ independent random vectors sampled from uniform distribution on $(p-1)$-dimensional unit sphere.
This paper investigates the limiting joint distribution for the minimum and the maximum values of their pairwise angles.  It proves that  the minimum and the maximum angles are asymptotically independent when both $n$ and $p$ tend to infinity, which solves an open problem raised in Cai, Fan and Jiang (2013) [\emph{Journal of Machine Learning Research} 14, 1837-1864]. Cai, Fan and Jiang (2013) obtained the limiting marginal distributions for both the minimum and the maximum angles under assumption $\lim_{n\to\infty}{\ln n}/p=\beta$ according to whether $\beta=0$,  
$\beta\in (0,\infty)$, or $\beta=\infty$.  This paper presents  unified limits for both joint distributions and marginal distributions regardless of the relative divergence rate of $n$ and $p$. The paper also derives the limiting distributions for some statistics based on the minimum and the maximum angles,     
\end{abstract}

\noindent\textbf{Key words:}  extreme angles;  high-dimensional sphere; Gumbel distribution; limiting distribution  

\thispagestyle{empty}

\section{Introduction}\label{intro}

Random points on high-dimensional spheres arise naturally in a wide range of
problems in probability, statistics, geometry, physics, and machine learning. 
An important statistical application is the problem of testing whether a
collection of directional observations is uniformly distributed on the unit sphere
\[
\mathbb S^{p-1}
=
\{x\in\mathbb R^p:\|x\|=1\},
\]
where $\|x\|$ denotes the Euclidean norm of vector $x$.
Given $n$ observations
$X_1,\ldots,X_n\in\mathbb S^{p-1}$ from a population $X$,
the null hypothesis is
\[
H_0: X \text{ is uniformly distributed over } \mathbb S^{p-1}
(\text{notation:}\text{Unif}(\mathbb S^{p-1})).
\]
This is a fundamental goodness-of-fit problem in directional statistics.
Applications arise whenever observations represent directions, orientations,
or normalized vectors. Examples include geological and biological directional
data, astronomy, computer vision, and high-dimensional statistical data.

Classical tests of spherical uniformity include the Rayleigh, Bingham, Gine,
and Ajne tests. These procedures are based on different aspects of the
observations. For example, the Rayleigh test is sensitive to alternatives
having a nonzero mean direction, whereas the Bingham test is designed to
detect departures from uniformity through second-order directional structure.
A substantial literature has developed around such tests; see, for example,
\cite{MardiaJupp2000,Jupp2001,Figueiredo2007}.

In this paper, we assume $X_1,\ldots, X_n$ is a random sample of size $n$ from  $\text{Unif}(\mathbb S^{p-1})$ distribution
and consider the pairwise angles
\[
\Theta_{ij}
=
\arccos(X_i^T  X_j),
\qquad 1\leq i<j\leq n.
\]
The collection $\{\Theta_{ij}:1\leq i<j\leq n\}$ 
describes the geometric configuration of the random points through their
mutual directions. Unlike coordinate-based characteristics, pairwise angles
are invariant under rotations of the entire point configuration and therefore
provide a natural tool for studying spherical data.

The distribution of the angle between two independent uniformly distributed
points on $\mathbb S^{p-1}$ is explicitly known. In particular, its probability density function is given by 
\[
f_p(\theta)
=
\frac{\Gamma(p/2)}
{\sqrt{\pi}\,\Gamma((p-1)/2)}
(\sin\theta)^{p-2},
\qquad 0\leq\theta\leq\pi.
\]
Consequently, when $p$ is fixed, the empirical distribution of the
$\binom{n}{2}$ pairwise angles
\[\mu_n=\frac{1}{\binom{n}{2}}\sum_{1\le i<j\le n}\delta_{\Theta_{ij}}\]
converges weakly to the measure $\nu$ with density $f_p(\theta)$. See, e.g. Cai et al.~\cite{CFJ13}. 
The situation becomes
especially interesting when the dimension $p$ also increases. In that regime,
the angles become concentrated around $\pi/2$, reflecting the familiar
phenomenon that two independent high-dimensional random vectors are nearly orthogonal. 
Cai et al.~\cite{CFJ13} considered the normalized empirical measure
\[\mu_{n,p}=\frac{1}{\binom{n}{2}}\sum_{1\le i<j\le n}\delta_{\sqrt{p-2}(\tfrac{\pi}{2}-\Theta_{ij})}\]
and showed that it converges weakly to the standard normal distribution.

The study of pairwise angles is also closely connected with random packing
problems on spheres. Given $n$ points on $\mathbb S^{p-1}$, the smallest
pairwise angle and the largest
pairwise angle, denoted as  
\begin{equation}\label{minmax}
    \Theta_{\min}
=
\min_{1\le i<j\le n}\Theta_{ij}, ~~~\Theta_{\max}
=
\max_{1\le i<j\le n}\Theta_{ij},
\end{equation}
correspond, respectively, to the most nearly coincident pair and the most nearly antipodal pair.
They quantify different aspects of the geometry of a random
point cloud. Such quantities are relevant not only to stochastic geometry
but also to coding theory, spherical designs, nearest-neighbor methods, and
high-dimensional data analysis.
An unusually small minimum angle
may indicate clustering of observations, whereas an unusually large maximum
angle may indicate an excess of nearly antipodal observations. Thus, tests
based on extreme pairwise angles can be viewed as tests for particular forms
of departure from spherical uniformity.

Cai et al.~\cite{CFJ13} studied the asymptotic behaviors of the extreme  pairwise angles when $X_1,\ldots, X_n$ are  independently sampled from
$\text{Unif}(\mathbb S^{p-1})$ distribution in two regimes: (i) fixed dimension $p$ as $n\to\infty$, and (ii) both $n,p\to\infty$. The authors derived the limiting distributions of $\Theta_{\min}$ and $\Theta_{\max}$. In particular, when both $p$ and $n$ tend to infinity,  Cai et al.~\cite{CFJ13}  obtained the limiting distributions for some  cosine function or sine function of the extreme angles under condition  $\lim_{n\to\infty}\ln n/p=\beta$ for three different settings:  $\beta=0$, $\beta\in (0,\infty)$, and $\beta=\infty$.
Cai et al.~\cite{CFJ13} also showed that  $\Theta_{\min}$ and $\Theta_{\max}$  are asymptotically independent when $p$ is fixed as $n$ tends to infinity.  Meanwhile, they posed an open problem on the asymptotic independence of  $\Theta_{\min}$ and $\Theta_{\max}$  when $p$ diverges as $n$ tends to infinity. The limiting joint distribution of $\Theta_{\min}$ and $\Theta_{\max}$  remains unknown in the literature.

Our objective in this paper is to investigate the limiting joint distribution of  
 $\Theta_{\min}$ and $\Theta_{\max}$ under the general setting that both of $p$ and $n$ tend to infinity  
 regardless of the convergence rate of $p$ relative to $n$.  We will show $\Theta_{\min}$ and $\Theta_{\max}$ are asymptotically independent, and the limiting marginal distributions are Gumbel or reversed Gumbel. We solve the open problem raised in Cai et al.~\cite{CFJ13}.   

The rest of the paper is organized as follows.  In section 2,  we introduce the main results of the paper, including unified limiting joint distributions for $\Theta_{\min}$ and $\Theta_{\max}$ and limiting distributions for a few statistics as functions of $\Theta_{\min}$ and $\Theta_{\max}$. In section~\ref{proofs}, we present some lemmas and proofs for the main results.

\section{Main Results}\label{main}

Throughout this section, we assume
$X_1,\ldots,X_n$ are i.i.d.  random vectors with $\text{Unif}(\mathbb S^{p-1})$ distribution, and $p=p_n\to\infty$ as $n\to\infty$.  The minimum and maximum angles, $\Theta_{\min}$  and $\Theta_{\max}$, are defined as in \eqref{minmax}.

We  define the following cumulative distribution function
\[ 
\Lambda(x)=\exp\left\{-\frac{1}{2\sqrt{2\pi}}e^{-x/2}\right\}, ~~x\in \RR,
\]
which is a Gumbel distribution.   Obviously,
\begin{equation}\label{D}
D(x)=1-\Lambda(-x)=1-\exp\left\{-\frac{1}{2\sqrt{2\pi}}e^{-x/2}\right\}, ~~x\in \RR
\end{equation}
is also a cumulative distribution function.  $D(x)$ is a reversed Gumbel distribution sine it is the cumulative distribution function of a random variable $Z$ when $-Z$ has the Gumbel distribution $\Lambda(x)$.  

For convenience, some parameters are denoted as follows
\begin{equation}\label{alpha-beta}
t_n=\exp\left\{-\frac{2\ln n}{p_n-1}\right\}, ~~
    \alpha_n=\frac{t_n}{2(p_n-1)\sqrt{1-t_n^2}}, ~~\beta_n=\alpha_n\ln (p_n-1)(1-t_n^2).
\end{equation}

Our main result on the limiting distribution of  $\Theta_{\min}$  and $\Theta_{\max}$ is the following theorem.  

\begin{theorem}\label{thm1} Assume $p=p_n\to\infty$ as $n\to\infty$. Set
\begin{equation}\label{thetamin}
\widebar{\Theta}_{\min}=\frac{1}{\alpha_n}\left\{\Theta_{\min}-\arcsin(t_n)\right\}-\ln (p_n-1)(1-t_n^2)
\end{equation}
and
\begin{equation}\label{thetamax}
\widebar{\Theta}_{\max}=\frac{1}{\alpha_n}\left\{\pi-\Theta_{\max}-\arcsin(t_n)\right\}-\ln (p_n-1)(1-t_n^2).
\end{equation}
Then $(\widebar{\Theta}_{\min}, \widebar{\Theta}_{\max})$ converges in distribution to $(Z_1,Z_2)$, where $Z_1$ and $Z_2$ are independent random variables, each with a reversed Gumbel distribution $G(x)$ as given in \eqref{D}.
\end{theorem}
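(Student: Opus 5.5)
We plan to prove the theorem through the Chen--Stein Poisson approximation in its two-type form (Arratia, Goldstein and Gordon). For fixed $x,y\in\RR$ put
\[
a_n=\arcsin(t_n)+\alpha_n\{x+\ln (p_n-1)(1-t_n^2)\}, \qquad b_n=\arcsin(t_n)+\alpha_n\{y+\ln (p_n-1)(1-t_n^2)\}.
\]
Then $\{\widebar{\Theta}_{\min}>x\}=\{\Theta_{ij}>a_n \ \forall i<j\}$ and $\{\widebar{\Theta}_{\max}>y\}=\{\Theta_{ij}<\pi-b_n\ \forall i<j\}$. We index the pairs by $I=\{(i,j):i<j\}$ and introduce the indicators $A_{ij}=1\{\Theta_{ij}\le a_n\}$ and $B_{ij}=1\{\Theta_{ij}\ge \pi-b_n\}$. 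The target is
\[
P(\widebar{\Theta}_{\min}>x,\widebar{\Theta}_{\max}>y)\to \exp\{-\lambda(x)-\lambda(y)\}, \qquad \lambda(x)=\tfrac{1}{2\sqrt{2\pi}}e^{x/2}.
\]
Since $1-D(x)=e^{-\lambda(x)}$, this limit is exactly $(1-D(x))(1-D(y))$. Joint convergence then follows because it holds for all $x,y$ and the limit is continuous.

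The first step is the marginal tail computation. We need
\[
\binom n2 P(\Theta_{12}\le a_n)\to\lambda(x),
\]
and the same limit with $y$ for $P(\Theta_{12}\ge \pi-b_n)=P(\Theta_{12}\le b_n)$, which holds by the symmetry $f_p(\theta)=f_p(\pi-\theta)$. We write
\[
P(\Theta_{12}\le a)=c_p\int_0^a(\sin\theta)^{p-2}d\theta, \qquad c_p=\frac{\Gamma(p/2)}{\sqrt\pi\,\Gamma((p-1)/2)}\sim\sqrt{p/(2\pi)}.
\]
A Laplace-type endpoint expansion gives
\[
\int_0^a\sin^{p-2}\theta\, d\theta=\frac{\sin^{p-1}a}{(p-1)\cos a}\,(1+o(1)),
\]
uniformly in the regimes in which $(p-1)\cos a\cdot \cot a$-type corrections vanish, with some care when $a$ is near $\pi/2$ (the case $\ln n=o(p)$). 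Next we set $\sin a_n=t_n\exp\{\alpha_n\cot(\arcsin t_n)\,u\,(1+o(1))\}$ with $u=x+\ln(p-1)(1-t_n^2)$. By the choice of $\alpha_n$ we have $\alpha_n\sqrt{1-t_n^2}/t_n=1/(2(p-1))$, hence
\[
\sin^{p-1}a_n=t_n^{p-1}e^{u/2+o(1)}=n^{-2}e^{u/2}.
\]
Combining these, $\binom n2 P\approx \tfrac12 n^2\sqrt{p/2\pi}\,n^{-2}e^{x/2}\sqrt{(p-1)(1-t_n^2)}/((p-1)\sqrt{1-t_n^2})$, which tends to $\lambda(x)$. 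The second-order terms of $\arcsin$ must be checked uniformly across all three regimes $\beta=0$, $\beta\in(0,\infty)$ and $\beta=\infty$. This is routine but delicate, especially when $t_n\to0$ (where $a_n\to0$) and when $t_n\to1$ (where the normal approximation applies).

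The second step is the Chen--Stein bound for the combined family $\{A_{ij}\}\cup\{B_{ij}\}$. We use the neighborhood of $(i,j)$ consisting of pairs sharing an index. The key fact is that for distinct $i,j,k$ the variables $X_i^TX_j$ and $X_i^TX_k$ are independent; conditioning on $X_i$ makes them i.i.d. This gives three consequences:
\begin{itemize}
\item events on pairs sharing exactly one index are independent, so the $b_2$ term (the sum over neighboring pairs of $E[\text{product}]$) equals $\sum P\cdot P$, which is $O(n^3\cdot n^{-4})\to0$;
\item the $b_1$ term is likewise $O(n^3 n^{-4})$;
\item the $b_3$ term vanishes, because each indicator is independent of the $\sigma$-field generated by pairs disjoint from it.
\end{itemize}
Within the same pair we have $A_{ij}B_{ij}=0$ for large $n$, since $a_n<\pi-b_n$. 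It remains to add the cross products with one shared index, such as $A_{ij}B_{ik}$; these are also independent and contribute $O(1/n)$. The Chen--Stein theorem then says that the total number of events is approximately Poisson with mean $\lambda(x)+\lambda(y)$, and the probability that there are no events converges to $e^{-\lambda(x)-\lambda(y)}$.

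The main obstacle is the first step. We need a single uniform asymptotic for the tail of $f_p$ at the moving threshold $a_n$, valid without any assumption on $\ln n/p$. The Chen--Stein part is clean because of the pairwise independence of the angles.
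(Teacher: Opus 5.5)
Your plan is essentially the paper's proof. It uses the same Chen--Stein bound with neighborhoods of pairs sharing an index ($b_1=O(1/n)$, $b_2\le b_1$ by pairwise independence, $b_3=0$) and the same integration-by-parts tail estimate; the only difference is that the paper merges your two types into the single indicator of $\{\Theta_{ij}\le\theta_n(-x)\}\cup\{\pi-\Theta_{ij}\le\theta_n(-y)\}$.

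The step you flag as the main obstacle needs no regime splitting. Set $N_n=(p_n-1)(1-t_n^2)\ge\min\{2\ln n,(p_n-1)/2\}\to\infty$ and $u=x+\ln N_n$. Then $\alpha_n u\,t_n/\sqrt{1-t_n^2}=t_n^2u/(2N_n)\to0$, so $\cos a_n\sim\sqrt{1-t_n^2}$ and $(p_n-1)\cos^2a_n\sim N_n\to\infty$, which validates the endpoint expansion. Likewise the second-order remainders in $(p_n-1)\ln(\sin a_n/t_n)$ are $O(u^2/N_n)\to0$. This is what Lemma~\ref{lem3} does, via the bound $(p_n-1)(1-t_n^2)>\min\{\ln n,\sqrt{p_n-1}\}$.
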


The following corollary is a direct consequence of Theorem~\ref{thm1}.

\begin{corollary}\label{cor1}  Assume $p=p_n\to\infty$ as $n\to\infty$.

\noindent(a). As $n\to\infty$,
\[
\frac{1}{\alpha_n}\big(\Theta_{\max}+\Theta_{\min}-\pi\big)\overset{d}\to Z_2-Z_1,
\]
where $\td$ stands for convergence in distribution.

\noindent(b). As $n\to\infty$, both  $\Theta_{\min}-\arcsin(t_n)$ and $\pi-\Theta_{\max}-\arcsin(t_n)$ converge in probability to zero.  
\end{corollary}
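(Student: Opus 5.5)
Both parts follow from Theorem~\ref{thm1} by the continuous mapping theorem, together with an elementary check that the centering and scaling constants in \eqref{alpha-beta} vanish.

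\emph{Part (a).} Subtracting \eqref{thetamax} from \eqref{thetamin}, the terms $\arcsin(t_n)$ and $\ln (p_n-1)(1-t_n^2)$ cancel. This gives the exact identity
\[
\frac{1}{\alpha_n}\big(\Theta_{\max}+\Theta_{\min}-\pi\big)=\widebar{\Theta}_{\min}-\widebar{\Theta}_{\max}.
\]
By Theorem~\ref{thm1} and the continuous map $(u,v)\mapsto u-v$, the right-hand side converges in distribution to $Z_1-Z_2$. Since $Z_1$ and $Z_2$ are independent with the same law $D$, the pair $(Z_1,Z_2)$ is exchangeable. Hence $Z_1-Z_2$ and $Z_2-Z_1$ have the same distribution, which proves (a).

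\emph{Part (b).} From \eqref{thetamin} and \eqref{alpha-beta},
\[
\Theta_{\min}-\arcsin(t_n)=\alpha_n\widebar{\Theta}_{\min}+\beta_n,
\]
and the same identity holds for $\pi-\Theta_{\max}-\arcsin(t_n)$ with $\widebar{\Theta}_{\max}$ in place of $\widebar{\Theta}_{\min}$. By Theorem~\ref{thm1}, $\widebar{\Theta}_{\min}$ and $\widebar{\Theta}_{\max}$ converge in distribution and are therefore tight. So it suffices to show $\alpha_n\to0$ and $\beta_n\to0$; then $\alpha_n\widebar{\Theta}_{\min}\to0$ in probability by Slutsky's lemma.

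The only real work, which is mild, is this deterministic step, and it must hold uniformly over all relative rates of $n$ and $p_n$. Put $s_n=2\ln n/(p_n-1)$, so that $1-t_n^2=1-e^{-2s_n}$, and set $x_n=(p_n-1)(1-t_n^2)$.

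1. For some absolute constant $c>0$ we have $1-e^{-2s}\ge c\min(s,1)$ for all $s\ge 0$. Hence
\[
x_n\ge c\min(2\ln n,\,p_n-1)\to\infty,
\]
and $x_n\le p_n-1$.

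2. Using $t_n\le 1$,
\[
\alpha_n\le \frac{1}{2(p_n-1)\sqrt{1-t_n^2}}=\frac{1}{2\sqrt{(p_n-1)x_n}}\to0 .
\]

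3. Since $x_n\ge1$ for large $n$,
\[
0\le\beta_n=\alpha_n\ln x_n\le \frac{1}{2\sqrt{p_n-1}}\cdot\frac{\ln x_n}{\sqrt{x_n}}\le \frac{1}{e\sqrt{p_n-1}}\to0,
\]
where we used $\sup_{x\ge1}\ln x/\sqrt{x}=2/e$.

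This bound is the same in the regimes $\ln n/p\to0$, $\ln n/p\to\beta\in(0,\infty)$ and $\ln n/p\to\infty$, so no case split is needed and (b) follows.
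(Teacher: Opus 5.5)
Your proposal is correct and follows the paper's proof. For (a) you use the exact identity $\alpha_n^{-1}(\Theta_{\max}+\Theta_{\min}-\pi)=\bar\Theta_{\min}-\bar\Theta_{\max}$ together with Theorem~\ref{thm1}; for (b) you write the centered angles as $\beta_n+\alpha_n\bar\Theta$ and show $\alpha_n,\beta_n\to0$, which the paper obtains from \eqref{ab-small}. Your explicit bounds $\alpha_n\le 1/(2\sqrt{(p_n-1)x_n})$ and $\beta_n\le 1/(e\sqrt{p_n-1})$, and your exchangeability remark identifying $Z_1-Z_2$ with $Z_2-Z_1$ in law, make explicit two steps the paper leaves implicit.
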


As an application of Theorem~\ref{thm1}, we present the limiting distributions for $\cos(\Theta_{\min})$ and $\cos(\Theta_{\max})$.

\begin{theorem}\label{thm2}
Assume $p=p_n\to\infty$ as $n\to\infty$. Define
\[
\widebar{C}_{\min}=\frac{1}{\alpha_nt_n}\left\{\cos(\Theta_{\min})
-\sqrt{1-t_n^2}\right\}+\ln (p_n-1)(1-t_n^2)
\]
and
\[
\widebar{C}_{\max}=\frac{1}{\alpha_nt_n}\left\{\cos(\Theta_{\max})
+\sqrt{1-t_n^2}\right\}-\ln (p_n-1)(1-t_n^2).
\]
Then $\widebar{C}_{\min}$ and $\widebar{C}_{max}$  
are asymptotically independent and jointly converges in distribution to $(-Z_1, Z_2)$, where $Z_1$ and $Z_2$ are independent random variables with the reversed Gumbel distribution $D$ as defined in Theorem~\ref{thm1}. 
\end{theorem}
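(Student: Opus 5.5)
Theorem~\ref{thm2} will follow from Theorem~\ref{thm1} by a first-order Taylor expansion of $\cos$ around $\arcsin(t_n)$ and $\pi-\arcsin(t_n)$, combined with Slutsky's lemma and the continuous mapping theorem. Write
\[
\Theta_{\min}=\arcsin(t_n)+\alpha_n\big(\widebar{\Theta}_{\min}+L_n\big),\qquad
\pi-\Theta_{\max}=\arcsin(t_n)+\alpha_n\big(\widebar{\Theta}_{\max}+L_n\big),
\]
where $L_n=\ln (p_n-1)(1-t_n^2)$. Set $\delta_n=\alpha_n(\widebar{\Theta}_{\min}+L_n)$. Using $\cos(\arcsin t_n)=\sqrt{1-t_n^2}$ and $\sin(\arcsin t_n)=t_n$, we get
\[
\cos\Theta_{\min}=\sqrt{1-t_n^2}\cos\delta_n - t_n\sin\delta_n .
\]
Since $\cos\Theta_{\max}=-\cos(\pi-\Theta_{\max})$, the same expression with $\widebar{\Theta}_{\max}$ and a minus sign gives $\cos\Theta_{\max}$.

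Expanding,
\[
\frac{\cos\Theta_{\min}-\sqrt{1-t_n^2}}{\alpha_n t_n}
=-\frac{\sin\delta_n}{\alpha_n}-\frac{\sqrt{1-t_n^2}\,(1-\cos\delta_n)}{\alpha_n t_n}.
\]
I will show that the right side equals $-(\widebar{\Theta}_{\min}+L_n)+o_P(1)$. Adding $L_n$ then gives $\widebar{C}_{\min}=-\widebar{\Theta}_{\min}+o_P(1)$, and similarly $\widebar{C}_{\max}=\widebar{\Theta}_{\max}+o_P(1)$. The joint convergence to $(-Z_1,Z_2)$, and hence the asymptotic independence, then follows from Theorem~\ref{thm1}.

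The needed estimates are the following. Since $\widebar{\Theta}_{\min}=O_P(1)$, we have $\delta_n=O_P(\alpha_n(1+|L_n|))$. The two error terms are:
\begin{itemize}
\item $|\sin\delta_n-\delta_n|/\alpha_n\le |\delta_n|^3/(6\alpha_n)=O_P\big(\alpha_n^2(1+|L_n|)^3\big)$;
\item $\sqrt{1-t_n^2}\,\delta_n^2/(2\alpha_n t_n)=O_P\big((1+|L_n|)^2\sqrt{1-t_n^2}\,\alpha_n/t_n\big)=O_P\big((1+|L_n|)^2/(p_n-1)\big)$, by the definition of $\alpha_n$.
\end{itemize}
Both should tend to zero. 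This is the main obstacle: controlling $\alpha_n$ and $L_n$ uniformly over all regimes of $\ln n/p_n$. I will bound them directly from \eqref{alpha-beta}.

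For $L_n$: we have $1-t_n^2\le 1$, and when $\ln n/p\to 0$, $1-t_n^2\approx 4\ln n/p$ with $\ln n\to\infty$. Hence $(p-1)(1-t_n^2)$ is bounded below by a quantity tending to infinity, so $L_n\ge 0$ eventually, and $L_n\le \ln p_n$ always. This gives $(1+|L_n|)^2/p_n\to0$.

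For $\alpha_n$: we have $\alpha_n\le t_n/(2(p-1)\sqrt{1-t_n^2})$. If $t_n$ is bounded away from $1$, then $\alpha_n=O(1/p)$ and $\alpha_n^2\ln^3p\to0$. If $t_n\to1$, then $\sqrt{1-t_n^2}\gtrsim\sqrt{\ln n/p}$, so $\alpha_n\lesssim 1/\sqrt{p\ln n}$ and $\alpha_n^2L_n^3\lesssim L_n^3/(p\ln n)\to0$. The case where $t_n$ is close to $0$ requires care with the division by $t_n$ in the second error term. In that case I instead use the factorization $\sqrt{1-t_n^2}\,\alpha_n/t_n=1/(2(p-1))$, in which the $t_n$ factors cancel exactly, so no issue arises. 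A subsequence argument covering these cases then handles a general sequence $p_n$.
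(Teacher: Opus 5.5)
Your proposal is correct and follows essentially the same route as the paper. Like the paper, you write $\Theta_{\min}$ and $\pi-\Theta_{\max}$ as $\arcsin(t_n)+\beta_n+\alpha_n\widebar{\Theta}$ via Theorem~\ref{thm1}, Taylor-expand the cosine to get $\widebar{C}_{\min}=-\widebar{\Theta}_{\min}+o_p(1)$ and $\widebar{C}_{\max}=\widebar{\Theta}_{\max}+o_p(1)$, and conclude by Slutsky; the only difference is bookkeeping, since you bound the cosine and sine remainders separately by $O_P\big((1+|L_n|)^2/(p_n-1)\big)$ and $O_P\big(\alpha_n^2(1+|L_n|)^3\big)$, whereas the paper lumps them into $O_p(\alpha_n^2+\beta_n^2)$ and invokes $\beta_n^2=o(t_n\alpha_n)$.
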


As mentioned in Remark 10 of Cai et al.~\cite{CFJ13},  the coherence of high-dimensional random matrix  $(X_1^T,\ldots, X_n^T)^T$ is defined as 
\[
L_{n,p}=\max_{1\le i< j\le  n}|X_i^TX_j|.
\]

In the study of the coherence of a data matrix formed by random samples from multivariate normal distributions, Cai and Jiang~\cite{CaiJiang2012} investigated the largest magnitude of
the squared Pearson correlation coefficients.   
When the data matrix consists of $p+1$ independent data points from an $n$-dimensional multivariate normal distribution with independent coordinates,  the coherence under investigation in Cai and Jiang~\cite{CaiJiang2012} has the same distribution as the following statistic 
\[
\widebar{L}_{n,p}=\max_{1\le i< j\le  n}(X_i^TX_j)^2.
\]
For more general case, see Cai and Jiang~\cite{CaiJiang2011}.

The limiting distributions for both $L_{n,p}$ and  $\widebar{L}_{n,p}$  can be obtained from Theorem~\ref{thm2}.

\begin{corollary}\label{cor2}
Assume $p=p_n\to\infty$ as $n\to\infty$. Then
\begin{equation}\label{limitingLnp}
\frac{1}{\alpha_nt_n}\big(L_{n,p}
-\sqrt{1-t_n^2}\big)+\ln (p_n-1)(1-t_n^2)\td \Lambda_1
\end{equation}
and
\begin{equation}\label{limitingL-bar}
\frac{p_n-1}{t_n^2}\left\{(\widebar{L}_{n,p}
-(1-t_n^2)\right\}+\ln (p_n-1)(1-t_n^2)\td \Lambda_1,
\end{equation}
where $\Lambda_1(x)=\Lambda^2(x)=\exp\left\{-\frac{1}{\sqrt{2\pi}}e^{-x/2}\right\}$, $x\in \RR$ is a Gumbel distribution.
    
\end{corollary}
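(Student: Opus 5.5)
Corollary~\ref{cor2} follows from Theorem~\ref{thm2} by writing the two coherence statistics in terms of $\cos(\Theta_{\min})$ and $\cos(\Theta_{\max})$. Since $X_i^TX_j=\cos\Theta_{ij}$ and cosine is decreasing on $[0,\pi]$,
\[
\max_{i<j}X_i^TX_j=\cos(\Theta_{\min}),\qquad \min_{i<j}X_i^TX_j=\cos(\Theta_{\max}),
\]
and hence
\[
L_{n,p}=\max\{\cos(\Theta_{\min}),\,-\cos(\Theta_{\max})\}.
\]
Let $a_n=1/(\alpha_nt_n)$ and $b_n=\ln(p_n-1)(1-t_n^2)$. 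The left side of \eqref{limitingLnp} equals
\[
\max\big\{\widebar C_{\min},\; a_n(-\cos\Theta_{\max}-\sqrt{1-t_n^2})+b_n\big\}=\max\{\widebar C_{\min},\,-\widebar C_{\max}\},
\]
because $a_n(-\cos\Theta_{\max}-\sqrt{1-t_n^2})+b_n=-\big(a_n(\cos\Theta_{\max}+\sqrt{1-t_n^2})-b_n\big)$. The map $(u,v)\mapsto\max\{u,-v\}$ is continuous. By Theorem~\ref{thm2} and the continuous mapping theorem, this statistic converges in distribution to $\max\{-Z_1,-Z_2\}$.

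Next I identify the limit law. Each $-Z_i$ has the Gumbel law $\Lambda$, since $P(-Z_i\le x)=1-D(-x)=\Lambda(x)$. By independence,
\[
P(\max\{-Z_1,-Z_2\}\le x)=\Lambda(x)^2=\Lambda_1(x),
\]
which proves \eqref{limitingLnp}.

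For \eqref{limitingL-bar}, note that $\widebar L_{n,p}=L_{n,p}^2$. Write
\[
L_{n,p}=\sqrt{1-t_n^2}+\frac{W_n-b_n}{a_n},
\]
where $W_n$ is the statistic in \eqref{limitingLnp}, so $W_n=O_P(1)$. Then
\[
\widebar L_{n,p}-(1-t_n^2)=\frac{2\sqrt{1-t_n^2}\,(W_n-b_n)}{a_n}+\frac{(W_n-b_n)^2}{a_n^2}.
\]
Multiplying by $(p_n-1)/t_n^2$ and using $a_n^{-1}=\alpha_nt_n=t_n^2/\big(2(p_n-1)\sqrt{1-t_n^2}\big)$, the first term becomes exactly $W_n-b_n$. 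The statistic in \eqref{limitingL-bar} therefore equals
\[
W_n+\frac{p_n-1}{t_n^2}\,\alpha_n^2t_n^2\,(W_n-b_n)^2=W_n+\frac{t_n^2(W_n-b_n)^2}{4(p_n-1)(1-t_n^2)}.
\]
It remains to show that the remainder term is $o_P(1)$; then \eqref{limitingL-bar} follows from \eqref{limitingLnp} by Slutsky's lemma.

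This remainder is the main obstacle, because it is not obviously small when $\ln n/p$ is not bounded. Set $m_n=(p_n-1)(1-t_n^2)$, so $b_n=\ln m_n$ and $t_n^2\le1$. The remainder is at most
\[
\frac{(W_n-\ln m_n)^2}{4m_n}\le\frac{W_n^2+(\ln m_n)^2}{2m_n}.
\]
It therefore suffices that $m_n\to\infty$, since then $(\ln m_n)^2/m_n\to0$ and $W_n^2/m_n\to0$ in probability. To check $m_n\to\infty$, use $1-t_n^2=1-e^{-4\ln n/(p_n-1)}$:
\begin{itemize}
\item If $\ln n/p_n$ stays bounded, then $1-t_n^2\asymp \ln n/p_n$, so $m_n\asymp\ln n\to\infty$.
\item Otherwise, along the relevant subsequence $1-t_n^2$ is bounded below, so $m_n\gtrsim p_n\to\infty$.
\end{itemize}
In both cases $m_n\to\infty$, which completes the argument.
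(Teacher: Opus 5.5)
Your proposal is correct and follows essentially the same route as the paper. Both write the statistic in \eqref{limitingLnp} as $\max\{\widebar C_{\min},-\widebar C_{\max}\}$ and apply Theorem~\ref{thm2} with the continuous mapping theorem, then expand $\widebar L_{n,p}=L_{n,p}^2$ using $2\alpha_nt_n\sqrt{1-t_n^2}=t_n^2/(p_n-1)$ and show the quadratic remainder is $O_p\big((\ln m_n)^2/m_n\big)=o_p(1)$ with $m_n=(p_n-1)(1-t_n^2)$.

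The only difference is how you get $m_n\to\infty$. The paper uses the bound $m_n>\delta_n=\min(\ln n,\sqrt{p_n-1})$ from \eqref{bound}. Your ``bounded / otherwise along a subsequence'' split reaches the same conclusion, but as phrased it needs to be run as a subsequence argument, or replaced by the direct estimate $1-e^{-4u}\ge(1-e^{-4})\min(u,1)$, which gives $m_n\ge(1-e^{-4})\min(\ln n,p_n-1)$.
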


Before concluding this section,  we compare our results in this paper briefly with those in Cai et al.~\cite{CFJ13} and Cai and Jiang~\cite{CaiJiang2012} and offer some comments. 
In Cai et al.~\cite{CFJ13} and Cai and Jiang~\cite{CaiJiang2012}, they investigated the limiting distributions for some functions of $\Theta_{\min}$ and $\Theta_{\max}$  and statistic $\widebar{L}_{n,p}$ under condition $\lim\limits_{n\to\infty}\frac{\ln n}{p}=\beta\in [0,\infty]$ and obtained their limiting distributions separately for three different cases, including $\beta=0$, $\beta\in (0,\infty)$, or $\beta=\infty$ and additional
conditions which reflect the relative convergence rate of $n$ and $p$. Although their limiting distributions are still Gumbel or reversed Gumbel, but their limits contain different centering or scaling constants since   they choose different normalization constants 
for different cases.  Consequently, they observed the so-called phase transition phenomena.  In our paper, those
transition phenomena disappear. For each statistic under investigation in this paper,  its normalization constants are appropriately selected so that its limit does not depend on how $n$ and $p$ grow.   Our results seem more convenient for applications.

\section{Proofs}\label{proofs}

We first introduce some notations used in this section. For two sequences of real numbers $\{a_n\}$ and $\{b_n\}$,  $a_n\sim b_n$,  $a_n=o(b_n)$, and  $a_n=O(b_n)$ 
stand for $\lim_{n\to\infty}(a_n/b_n)=1$, $\lim_{n\to\infty}(a_n/b_n)=0$ and $\limsup_{n\to\infty}|a_n/b_n|<\infty$, respectively. 
For any two sequences of random variables $\{U_n\}$ and $\{V_n\}$,
$U_n=o_p(V_n)$ implies $U_n/v_n$ converges in probability to zero, that is, $\lim_{n\to\infty}\PP(|U_n/V_n|>\varepsilon)=0$ for any $\varepsilon>0$, and
$U_n=O_p(V_n)$ implies $U_n/v_n$ is tight or equivalently
$\lim_{\varepsilon\to\infty}\limsup_{n\to\infty}\PP(|U_n/V_n|>\varepsilon)=0$.

Nest, we introduce a few lemmas before the proofs for the main results in the paper.

\begin{lemma}[Lemma 12 in Cai et al.~\cite{CFJ13} ]\label{recursion}
Let $p\geq 2$. Then,

\noindent (i) $\{\Theta_{ij};\, 1\leq i < j\leq n\}$ are pairwise independent and identically distributed with  density function
\begin{equation}\label{volume}
h(\theta) =\frac{1}{\sqrt{\pi}}\frac{\Gamma(\frac{p}{2})}{\Gamma(\frac{p-1}{2})}\cdot(\sin \theta)^{p-2},\ \ \ \theta \in [0, \pi].
\end{equation}

\noindent (ii) If ``$\Theta_{ij}$" in (i) is replaced by ``$\pi -\Theta_{ij}$", the conclusion in (i) still holds.
\end{lemma}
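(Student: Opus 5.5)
We prove part (i) by rotation invariance and conditioning, and deduce part (ii) from it.

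\textbf{Single angle.} Fix a pair $i<j$. Since $\text{Unif}(\mathbb S^{p-1})$ is invariant under orthogonal transformations, conditionally on $X_i=x$ the inner product $X_i^TX_j=x^TX_j$ has the same law as the first coordinate $U_1$ of a uniform vector $U$ on $\mathbb S^{p-1}$. This law does not depend on $x$. Using the representation $U=G/\|G\|$ with $G\sim N(0,I_p)$, or the standard surface-area (slice) formula, $U_1$ has density proportional to $(1-u^2)^{(p-3)/2}$ on $[-1,1]$. The change of variables $u=\cos\theta$ gives $du=-\sin\theta\,d\theta$, so $\Theta_{ij}=\arccos(U_1)$ has density proportional to $(\sin\theta)^{p-2}$ on $[0,\pi]$. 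The constant $\Gamma(p/2)/(\sqrt{\pi}\,\Gamma((p-1)/2))$ follows from the Beta integral
\[
\int_0^\pi(\sin\theta)^{p-2}\,d\theta=B\bigl(\tfrac12,\tfrac{p-1}{2}\bigr).
\]
This settles identical distribution with density \eqref{volume}.

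\textbf{Pairwise independence.} Take two distinct pairs $\{i,j\}\neq\{k,l\}$. If they are disjoint, $\Theta_{ij}$ and $\Theta_{kl}$ are functions of disjoint sets of independent vectors, so they are independent. If they share one index, say $\Theta_{ij}$ and $\Theta_{ik}$ with $j\ne k$, condition on $X_i=x$. Given $X_i=x$, the angles $\arccos(x^TX_j)$ and $\arccos(x^TX_k)$ are independent because $X_j,X_k$ are independent. By the first step, each has conditional density $h$, which does not depend on $x$. Hence
\[
\PP(\Theta_{ij}\in A,\Theta_{ik}\in B\mid X_i)=\nu(A)\nu(B),
\]
where $\nu$ is the law with density $h$. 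Taking expectations gives independence. (Mutual independence of all angles fails, for instance for triangles, which is why only pairwise independence is claimed.)

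\textbf{Part (ii).} Since $h(\pi-\theta)=h(\theta)$, each $\pi-\Theta_{ij}$ again has density $h$. Pairwise independence is preserved under the measurable map $\theta\mapsto\pi-\theta$. Alternatively, one can note that $X_j\mapsto -X_j$ preserves the uniform law.

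The only step needing any care is the conditioning argument for pairs sharing an index. The key point there is that the conditional law of $x^TX_j$ is free of $x$, which is exactly where rotation invariance is used.
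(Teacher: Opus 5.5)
Your proof is correct and complete. The rotation-invariance reduction, the density computation via $u=\cos\theta$ with the Beta normalization, the conditioning on the shared vector $X_i$ for overlapping pairs, and the symmetry $h(\pi-\theta)=h(\theta)$ for part (ii) all check out. The paper does not prove this lemma itself; it quotes it as Lemma 12 of Cai, Fan and Jiang (2013). Your argument is the standard route to that result, so there is nothing to compare against and nothing missing.
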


\begin{lemma}\label{tail probab} Assume $p=p_n\to \infty$ as $n\to\infty$, and  $\{\theta_n\}$ is a sequence of constants in $(0,\pi/2)$ such that $(p_n-1)(\cos\theta_n)^2\to\infty$. Then
\begin{equation}\label{low-tail}
\PP(\Theta_{12}\le \theta_n)=\int^{\theta_n}_0h(\theta)d\theta=\frac{1+o(1)}{\sqrt{2\pi}}\cdot\frac{(\sin \theta_n)^{p_n-1}}{\sqrt{p_n}\cos \theta_n}
\end{equation}
as $n\to\infty$. Furthermore, we have
\begin{equation}\label{upp-tail}
\PP(\pi-\Theta_{12}\le \theta_n)=\int^{\theta_n}_0h(\theta)d\theta=\frac{1+o(1)}{\sqrt{2\pi}}\cdot\frac{(\sin \theta_n)^{p_n-1}}{\sqrt{p_n}\cos \theta_n}
\end{equation}
    
\end{lemma}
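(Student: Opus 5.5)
We work directly with the density $h$ from Lemma~\ref{recursion}. The second claim \eqref{upp-tail} follows from the first, because $h(\pi-\theta)=h(\theta)$ and so $\pi-\Theta_{12}$ has the same density. It therefore suffices to prove \eqref{low-tail}. Write $p=p_n$ and $c_p=\Gamma(p/2)/(\sqrt{\pi}\,\Gamma((p-1)/2))$. By the standard ratio asymptotics of the Gamma function, $\Gamma(p/2)/\Gamma((p-1)/2)\sim\sqrt{p/2}$, so $c_p\sim\sqrt{p}/\sqrt{2\pi}$. The claim then reduces to
\[
I_n:=\int_0^{\theta_n}(\sin\theta)^{p-2}\,d\theta=(1+o(1))\,\frac{(\sin\theta_n)^{p-1}}{(p-1)\cos\theta_n}.
\]
Indeed, multiplying by $c_p$ gives $(1+o(1))\sqrt{p}\,(\sin\theta_n)^{p-1}/(\sqrt{2\pi}(p-1)\cos\theta_n)$, and $\sqrt p/(p-1)\sim 1/\sqrt p$.

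For the integral, we integrate by parts using $\frac{d}{d\theta}(\sin\theta)^{p-1}=(p-1)(\sin\theta)^{p-2}\cos\theta$. Writing $(\sin\theta)^{p-2}=\frac{1}{(p-1)\cos\theta}\frac{d}{d\theta}(\sin\theta)^{p-1}$ on $[0,\theta_n]$, where $\cos\theta>0$, we get
\[
I_n=\frac{(\sin\theta_n)^{p-1}}{(p-1)\cos\theta_n}-\frac{1}{p-1}\int_0^{\theta_n}(\sin\theta)^{p-1}\frac{\sin\theta}{\cos^2\theta}\,d\theta .
\]
The boundary term at $0$ vanishes. The remainder $R_n$ is nonnegative, which gives the upper bound $I_n\le (\sin\theta_n)^{p-1}/((p-1)\cos\theta_n)$ at once. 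For the lower bound we bound $R_n$ by monotonicity. On $[0,\theta_n]$ we have $\sin^2\theta/\cos^2\theta\le \tan^2\theta_n$, so
\[
R_n\le \frac{\tan^2\theta_n}{p-1}\,I_n=\frac{\sin^2\theta_n}{(p-1)\cos^2\theta_n}\,I_n .
\]
By hypothesis $(p-1)\cos^2\theta_n\to\infty$, so $R_n=o(I_n)$. Hence $I_n(1+o(1))=(\sin\theta_n)^{p-1}/((p-1)\cos\theta_n)$, which is the required asymptotic.

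The only place a genuine condition enters is this control of the remainder. It is exactly where the assumption $(p_n-1)\cos^2\theta_n\to\infty$ is needed: it makes the Laplace-type mass concentrate near the endpoint $\theta_n$ on a scale $1/((p-1)\cot\theta_n)$ that is small relative to the curvature. We expect this step to be the main, though mild, obstacle. Everything else is the Gamma-ratio asymptotics, which we take as standard, for example via $\Gamma(x+1/2)/\Gamma(x)\sim\sqrt{x}$.
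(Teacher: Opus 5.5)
Your proposal is correct and follows essentially the same route as the paper. Both integrate by parts in $I_n$, bound the remainder by a vanishing multiple of $I_n$, apply the Gamma-ratio asymptotics, and get the upper tail from the symmetry $h(\pi-\theta)=h(\theta)$; your multiple $\tan^2\theta_n/(p_n-1)$ is slightly sharper than the paper's $1/((p_n-1)\cos^2\theta_n)$, though either suffices.
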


\begin{proof}
Set
\[
I_n=\int^{\theta_n}_0(\sin \theta)^{p-2}d\theta.
\]
Then by integrating by parts
\begin{eqnarray*}
I_n&=&\frac{1}{p_n-1}\int^{\theta_n}_0\frac{[(\sin \theta)^{p_n-1}]'}{\cos \theta}d\theta\\
&=&\frac{1}{p_n-1}\frac{(\sin \theta)^{p_n-1}}{\cos \theta}\Big|^{\theta_n}_0-\frac{1}{p_n-1}\int^{\theta_n}_0\frac{(\sin \theta)^{p_n-1}\sin\theta}{(\cos \theta)^2}d\theta\\
&=&\frac{1}{p_n-1}\frac{(\sin \theta_n)^{p_n-1}}{\cos \theta_n}-\int^{\theta_n}_0(\sin \theta)^{p_n-2}\frac{(\sin\theta)^2}{(p_n-1)(\cos \theta)^2}d\theta.
\end{eqnarray*}
Since $\cos \theta$ is decreasing in $\theta\in (0, \pi/2)$,
\[
\sup_{\theta\in (0, \theta_n)}\frac{(\sin \theta)^2}{(p_n-1)(\cos \theta)^2}\le \frac{1}{(p_n-1)(\cos \theta_n)^2}\to 0 
\]
as $n\to\infty$.
We obtain that
\[
I_n=\frac{1}{p_n-1}\frac{(\sin \theta_n)^{p_n-1}}{\cos \theta_n}+o(I_n),
\]
which implies 
\[
I_n=\frac{1}{p_n-1}\frac{(\sin \theta_n)^{p_n-1}}{\cos \theta_n}(1+o(1)).
\]

By using the Stirling formula we have 
\begin{equation}\label{white}
\frac{\Gamma(\frac{p_n}{2})}{\Gamma(\frac{p_n-1}{2})}= \sqrt{\frac{p_n-1}{2}}(1+o(1))
\end{equation}
as $p_n\to\infty$; see also equation (17) in Cai et al.~\cite{CFJ13}. Then it follows from Lemma~\ref{recursion} (i) with $p=p_n$ that
\[
\PP(\Theta_{12}\le \theta_n)=\int^{\theta_n}_0h(\theta)d\theta=
\frac{1}{\sqrt{\pi}}\frac{\Gamma(\frac{p_n}{2})}{\Gamma(\frac{p_n-1}{2})}\cdot I_n=
\frac{1+o(1)}{\sqrt{2\pi}}\cdot\frac{(\sin \theta_n)^{p_n-1}}{\sqrt{p_n-1}\cos \theta_n},
\]
proving \eqref{low-tail}.  \eqref{upp-tail} follows from Lemma~\ref{recursion} (ii) and \eqref{low-tail}.  This completes the proof of the lemma.
\end{proof}

\begin{lemma}\label{lem3}
Define for each fixed $s\in \RR$
\[
\Delta_n(s)=\frac{\ln (p_n-1)(1-t_n^2)}{p_n-1}+\frac{s}{p_n-1} 
\]
and set
\begin{equation}\label{theta}
\theta_n(s)=\arcsin (t_n)+\frac{t_n}{2\sqrt{1-t_n^2}}\Delta_n(s). 
\end{equation}
Then $0<\theta_n(s)<\pi/2$ for all large $n$,
\begin{equation}\label{lower}
\lim_{n\to\infty}\frac{n(n-1)}{2}\PP\big(\Theta_{12}\le \theta_n(s)\big)=\frac{e^{s/2}}{2\sqrt{2\pi}}.
\end{equation}   
and
\begin{equation}\label{upper}
\lim_{n\to\infty}\frac{n(n-1)}{2}\PP\big(\pi-\Theta_{12}\le \theta_n(s)\big)= \frac{e^{s/2}}{2\sqrt{2\pi}}.
\end{equation}
\end{lemma}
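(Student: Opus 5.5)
The plan is to apply Lemma~\ref{tail probab} with $\theta_n=\theta_n(s)$ and then expand $(\sin\theta_n(s))^{p_n-1}$ and $\cos\theta_n(s)$ to sufficient precision. Throughout I write $p=p_n$, $t=t_n$, $\Delta=\Delta_n(s)$.

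\emph{Preliminary facts about $t_n$.} Since $t^{p-1}=n^{-2}$, we have $0<t<1$ and $-\ln t=2\ln n/(p-1)$. The key quantity is $(p-1)(1-t^2)$. If $\ln n/p\to\infty$, then $t\to0$ and $(p-1)(1-t^2)\sim p\to\infty$. If $\ln n/p$ is bounded, then $1-t^2\asymp \ln n/p$, so $(p-1)(1-t^2)\asymp \ln n\to\infty$. The same two-case argument along subsequences (or directly via $1-t^2\ge 1-t$ and $1-t\ge \min(c,-c\ln t)$) shows $(p-1)(1-t^2)\to\infty$ in general. It follows that $\Delta\cdot\frac{1}{1-t^2}=\frac{\ln((p-1)(1-t^2))+s}{(p-1)(1-t^2)}\to0$. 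Hence the increment $\varepsilon:=\frac{t\Delta}{2\sqrt{1-t^2}}$ is $o(\sqrt{1-t^2})$, which is small compared with the distance $\pi/2-\arcsin t\asymp\sqrt{1-t^2}$. This gives $0<\theta_n(s)<\pi/2$ for large $n$, and also $\cos\theta_n(s)=\sqrt{1-t^2}(1+o(1))$. Consequently $(p-1)\cos^2\theta_n(s)\to\infty$, so Lemma~\ref{tail probab} applies.

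\emph{Main expansion.} By Taylor's theorem,
\[
\ln\sin(\arcsin t+\varepsilon)=\ln t+\varepsilon\cot(\arcsin t)-\tfrac12\varepsilon^2\csc^2(\xi)
\]
for some $\xi$ between $\arcsin t$ and $\theta_n(s)$. Since $\cot(\arcsin t)=\sqrt{1-t^2}/t$, the linear term is exactly $\varepsilon\sqrt{1-t^2}/t=\Delta/2$. Multiplying by $p-1$ gives
\[
(p-1)\ln\sin\theta_n(s)=-2\ln n+\tfrac12\ln\big((p-1)(1-t^2)\big)+\tfrac s2-R_n,
\qquad R_n=\tfrac{p-1}{2}\varepsilon^2\csc^2\xi .
\]
The main obstacle is showing $R_n\to0$ uniformly across all regimes, including $t\to0$, where $\csc\xi$ could blow up. I would handle it by bounding $\varepsilon^2\csc^2\xi\le \varepsilon^2/t^2$, using $\sin\xi\ge\sin(\arcsin t)=t$. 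Then
\[
R_n\le \frac{(p-1)\Delta^2}{8(1-t^2)}=\frac{\big(\ln((p-1)(1-t^2))+s\big)^2}{8(p-1)(1-t^2)}\to0,
\]
which holds because $(p-1)(1-t^2)\to\infty$. This single bound covers every regime of $\ln n/p$.

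\emph{Conclusion.} Substituting into \eqref{low-tail}:
\[
\frac{n(n-1)}2\PP(\Theta_{12}\le\theta_n(s))
=\frac{(1+o(1))\,n^2}{2\sqrt{2\pi}}\cdot\frac{n^{-2}\sqrt{(p-1)(1-t^2)}\,e^{s/2}}{\sqrt{p}\,\sqrt{1-t^2}}
\to\frac{e^{s/2}}{2\sqrt{2\pi}},
\]
using $\sqrt{p-1}/\sqrt p\to1$ and $n(n-1)/n^2\to1$. This proves \eqref{lower}. Statement \eqref{upper} follows identically from \eqref{upp-tail}.

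The only delicate point beyond the remainder bound is verifying $(p-1)(1-t^2)\to\infty$ in every regime. I would prove it cleanly via the inequality $1-t\ge \frac{-\ln t}{1-\ln t}$, which gives
\[
(p-1)(1-t)\ge \frac{2\ln n}{1+2\ln n/(p-1)}\ge \min\big(\ln n,\ (p-1)/2\big)\to\infty .
\]
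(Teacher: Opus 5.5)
Your proof is correct and follows essentially the same route as the paper: apply Lemma~\ref{tail probab} at $\theta_n(s)$, expand $(\sin\theta_n(s))^{p_n-1}$ and $\cos\theta_n(s)$ around $\arcsin(t_n)$, with every error term controlled by the single fact $(p_n-1)(1-t_n^2)\to\infty$. The differences are minor. You expand $\ln\sin$ with a Lagrange remainder and bound it using $\sin\xi\ge t_n$; this bound needs $\Delta_n(s)>0$, which holds for large $n$ and should be stated explicitly. You also prove the divergence via $e^u\ge 1+u$ instead of the paper's $\delta_n=\min(\ln n,\sqrt{p_n-1})$. These choices make your remainder bookkeeping uniform across regimes and slightly more careful than the paper's $O(\cdot)$ manipulations.
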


\begin{proof}

Note that $\arcsin$ is the inverse of the sine function.  Since $\sin x$ is strictly increasing in  $(0, \pi/2)$,  $\arcsin t$ is well defined for $t\in (0,1)$ and $\arcsin (t)\in (0, \pi/2)$ for $t\in (0,1)$.   

Define $\delta_n=\min(\ln n, \sqrt{p_n-1})$.  Then 
\begin{equation}\label{delta}
    \delta_n\to\infty, ~~\frac{\delta_n}{p_n-1}\le \frac1{\sqrt{p_n-1}}\to 0 
\end{equation}
as $n\to\infty$.

First, we show that

\begin{equation}\label{bound}
\delta_n< (p_n-1)(1-t_n^2)<p_n-1
\end{equation}
for all large $n$.
The second inequality of \eqref{bound} is trivial since $t_n\in (0,1)$.

Observe that
\[
(p_n-1)(1-t_n^2)=(p_n-1)\left\{1-\exp\left(-
\frac{4\ln n}{p_n-1}\right)\right\} \ge (p_n-1)\left\{1-\exp\Big(-
\frac{4\delta_n}{p_n-1}\Big)\right\}
>\delta_n 
\]
for all large $n$ due to the fact that $\lim_{n\to\infty}\delta_n/(p_n-1)=0$ from \eqref{delta}. This completes the proof of the first inequality of \eqref{bound}. 

For any $s\in \RR$,  we have from \eqref{bound} and \eqref{delta} that
\begin{equation}\label{1st}
\Delta_n(s)=O\left(\frac{\ln (p_n-1)(1-t_n^2)}{p_n-1}\right)\to 0
\end{equation}
and
\begin{equation}\label{2nd}
\frac{\Big(\Delta_n(s)\Big)^2}{1-t_n^2}=\frac{\Big((p_n-1)\Delta_n(s)\Big)^2}{(p_n-1)^2(1-t_n^2)}=\frac{O\Big(\big[\ln (p_n-1)(1-t_n^2)\big]^2\Big)}{(p_n-1)^2(1-t_n^2)}=o(\frac1{p_n-1})
\end{equation}
as $n\to\infty$. The above two equations will be used in the proofs of the following expansions for $\sin(\theta_n(s))$ and $\cos(\theta_n(s))$:
\begin{equation}\label{sin-p}
\big(\sin(\theta_n(s)\big)^{p_n-1}=\frac{\sqrt{(p_n-1)(1-t_n^2)}}{n^2}e^{s/2}\big(1+o(1)\big)
\end{equation}
and
\begin{equation}\label{cos-theta}
\cos(\theta_n(s))=\sqrt{1-t_n^2}(1+o(1)).
\end{equation}

First, observe that
\[
\sin (\arcsin(t_n))=t_n, ~~\cos(\arcsin(t_n))=\sqrt{1-t_n^2}.
\]
Then it follows from Taylor's expansion that
\begin{eqnarray*}
\sin(\theta_n(s))&=&\sin (\arcsin(t_n))+\cos(\arcsin(t_n))\frac{t_n}{2\sqrt{1-t_n^2}}\Delta_n(s)+O\Big(\big[\frac{t_n}{\sqrt{1-t_n^2}}\Delta_n(s)\big]^2\Big)\\
&=&t_n+\frac12t_n\Delta_n(s)+O\Big(\big[\frac{t_n}{\sqrt{1-t_n^2}}\Delta_n(s)\big]^2\Big)\\
&=&t_n\Big(1+\frac12\Delta_n(s)+O(\frac{(\Delta_n(s))^2}{1-t_n^2})\Big).
\end{eqnarray*}
Therefore, we have
\begin{eqnarray*}
\big(\sin(\theta_n(s)\big)^{p_n-1}
&=&\exp\Big\{(p_n-1)\ln t_n+(p_n-1)\ln \Big(1+\frac12\Delta_n(s)+O(\frac{(\Delta_n(s))^2}{1-t_n^2})\Big)\Big\}\\
&=&\exp\Big\{-2\ln n+\frac12(p_n-1)\Delta_n(s)+O(\frac{(\Delta_n(s))^2}{1-t_n^2})\Big\}\\
&=&\exp\Big\{-2\ln n+\frac12\ln(p_n-1)(1-t_n^2)+\frac{s}2+o(1)\Big\},
\end{eqnarray*}
proving \eqref{sin-p}. Similarly, we have
\begin{eqnarray*}
\cos(\theta_n(s))&=&\cos (\arcsin(t_n))+O\big(\frac{t_n}{\sqrt{1-t_n^2}}\Delta_n(s)\big) \\
&=&\sqrt{1-t_n^2}\Big(1+O\big(\frac{1}{1-t_n^2}\Delta_n(s)\big)\Big) \\
&=&\sqrt{1-t_n^2}\Big(1+o\big(1\big)\Big)
\end{eqnarray*}
and obtain \eqref{cos-theta}.

Now we will show $0<\theta_n(s)<\pi/2$ for all large $n$.   From definition \eqref{theta}, we have
\[
t_n(1+ \frac{\Delta_n(s)}
{2\sqrt{1-t_n^2}})\le \theta_n(s)\le \frac{\pi}{2}+
\frac{t_n\Delta_n(s)}
{2\sqrt{1-t_n^2}}.
\]
In establishing the first inequality above, we have used a simple fact that $t_n\le \arcsin(t_n)$.   From \eqref{2nd} we conclude that $0<\theta_n(s)<\frac{2\pi}{3}$ for all large $n$. Moreover, it follows from \eqref{cos-theta} that  
$\theta_n(s)>0$ for all large $n$, which implies
$0<\theta_n(s)<\pi/2$.

From \eqref{cos-theta} and \eqref{bound},  $p_n(\cos \theta_n)^2\to\infty$, and thus we can apply Lemma~\ref{tail probab} and have
\[
\frac{n(n-1)}{2}\PP(\Theta_{12}\le \theta_n(s))=
\frac{n^2(1+o(1))}{2\sqrt{2\pi}}\cdot\frac{(\sin \theta_n(s))^{p_n-1}}{\sqrt{p_n-1}\cos \theta_n(s)},
\]
which converges to $\frac{e^{s/2}}{2\sqrt{2\pi}}$ from \eqref{sin-p} and \eqref{cos-theta}. That is,
\eqref{lower} holds.  \eqref{upper} holds as well since $\pi-\Theta_{12}$ and $\Theta_{12}$ have the same distribution from Lemma~\ref{recursion} (ii).
\end{proof}

Conclusion:

\[
\lim_{n\to\infty}\PP\Big(\bar\Theta_{\min}\le \theta_{n}(y)\Big)=\exp(-\frac{e^{y/2}}{2\sqrt{2\pi}})
\]

We also need the following result on Poisson approximation, see e,g. Theorem 1 in \cite{AGG}.

 \begin{lemma}\label{lem:Poisson}
        Let $T$ be an index set, and for $\alpha\in T $,
        let $Z_{\alpha}$ be a Bernoulli random variable with
        $p_{\alpha}=\PP(Z_{\alpha}=1)=1-\PP(Z_{\alpha}=0)$.
        For each $ \alpha\in T$, let $T_{\alpha}$ be a subset of $T$ with $ \alpha\in T_{\alpha}$. Set 
        \[
        S=\sum_{\alpha\in T}Z_{\alpha},\,\, \lambda=\mathbb E S=\sum_{\alpha\in T}p_{\alpha}\in (0, \infty).
        \]
        Assume $W$ is a Poisson random variable with mean $\mathbb E W=\lambda$,  
      and denote $d_{\mathrm{TV}}(\mathcal L(S),\mathcal L(W))$ as the total variation distance between the distributions of $S$ and $W$. Then
        \[
        d_{\mathrm{TV}}(\mathcal L(S),\mathcal L (Z))\leq 2(b_1+b_2+b_3),
        \]
   where 
    \[
    b_1=\sum_{\alpha\in T}\sum_{\beta\in T_{\alpha}}p_{\alpha}p_{\beta}, ~b_2=\sum_{\alpha\in T}\sum_{\alpha\neq\beta\in T_{\alpha}}\mathbb{E}[Z_{\alpha}Z_{\beta}],~b_3=\sum_{\alpha\in T}\mathbb{E}|\mathbb{E}[Z_{\alpha}|\sigma(Z_{\beta},\beta\not\in T_{\alpha})]-p_{\alpha}|,
    \]
 and $\sigma(Z_{\beta},\beta\not\in T_{\alpha})$ is the $\sigma$-algebra generated by $\{Z_{\beta},\beta\not\in T_{\alpha}\}$. 
    \end{lemma}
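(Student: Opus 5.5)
The statement is the Chen--Stein Poisson approximation bound of Arratia, Goldstein and Gordon; the $\mathcal L(Z)$ in the display should read $\mathcal L(W)$. I would prove it with Stein's method for the Poisson law. Fix $A\subset\{0,1,2,\dots\}$ and let $f=f_A$ be the bounded solution, with $f(0)=0$, of the Stein equation
\[
\lambda f(j+1)-jf(j)=\mathbf 1_A(j)-\PP(W\in A),\qquad j\ge 0 .
\]
Evaluating at $j=S$ and taking expectations gives
\[
\PP(S\in A)-\PP(W\in A)=\EE\big[\lambda f(S+1)-Sf(S)\big]=\sum_{\alpha\in T}\EE\big[p_\alpha f(S+1)-Z_\alpha f(S)\big].
\]
The first ingredient is the pair of classical bounds on this solution:
\[
\|f\|_\infty\le \min(1,\lambda^{-1/2}),\qquad \|\Delta f\|_\infty:=\sup_j|f(j+1)-f(j)|\le \frac{1-e^{-\lambda}}{\lambda}\le 1 .
\]
These come from the explicit representation $f(j+1)=\frac{(j)!}{\lambda^{j+1}}e^{\lambda}\big[\PP(W\in A\cap\{0,\dots,j\})-\PP(W\in A)\PP(W\le j)\big]$. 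One writes $f_A=\sum_{k\in A}f_{\{k\}}$ and checks that $f_{\{k\}}(j+1)-f_{\{k\}}(j)$ is positive only when $j=k$. This last step is the one I expect to be the main technical obstacle, because the increment bound must be uniform in $A$. I would follow the standard monotonicity argument for the one-point sets $\{k\}$.

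Next I would localize. For each $\alpha$ set
\[
Y_\alpha=\sum_{\beta\in T_\alpha,\,\beta\ne\alpha}Z_\beta,\qquad V_\alpha=\sum_{\beta\notin T_\alpha}Z_\beta,
\]
so that $S=Z_\alpha+Y_\alpha+V_\alpha$. Using $Z_\alpha f(S)=Z_\alpha f(V_\alpha+1+Y_\alpha)$, I split the $\alpha$-th summand into three pieces:
\[
p_\alpha\EE\big[f(S+1)-f(V_\alpha+1)\big]
+\EE\big[Z_\alpha\{f(V_\alpha+1)-f(V_\alpha+1+Y_\alpha)\}\big]
+\EE\big[(p_\alpha-Z_\alpha)f(V_\alpha+1)\big].
\]

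Each piece is then bounded separately.
\begin{itemize}
\item First piece: $|f(S+1)-f(V_\alpha+1)|\le\|\Delta f\|(Z_\alpha+Y_\alpha)$, so it is at most $\|\Delta f\|\sum_{\beta\in T_\alpha}p_\alpha p_\beta$. Summing over $\alpha$ gives $\|\Delta f\|\,b_1$.
\item Second piece: it is at most $\|\Delta f\|\,\EE[Z_\alpha Y_\alpha]=\|\Delta f\|\sum_{\beta\in T_\alpha,\beta\ne\alpha}\EE[Z_\alpha Z_\beta]$. Summing gives $\|\Delta f\|\,b_2$.
\item Third piece: $V_\alpha$ is $\sigma(Z_\beta,\beta\notin T_\alpha)$-measurable, so conditioning on that $\sigma$-algebra gives $\EE\big[(p_\alpha-\EE[Z_\alpha\mid\sigma(Z_\beta,\beta\notin T_\alpha)])f(V_\alpha+1)\big]$. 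This is at most $\|f\|$ times the $\alpha$-th term of $b_3$.
\end{itemize}

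Combining the three pieces,
\[
|\PP(S\in A)-\PP(W\in A)|\le \max(\|f\|,\|\Delta f\|)(b_1+b_2+b_3)\le b_1+b_2+b_3
\]
uniformly in $A$. If $d_{\mathrm{TV}}$ is taken as $\sum_k|\PP(S=k)-\PP(W=k)|=2\sup_A|\PP(S\in A)-\PP(W\in A)|$, this yields the stated bound $2(b_1+b_2+b_3)$. Under the supremum convention the factor $2$ is simply conservative.
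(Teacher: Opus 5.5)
Your proof is correct and is essentially the Stein--Chen argument behind Theorem 1 of Arratia, Goldstein and Gordon. The paper does not prove this lemma; it only cites that theorem, so your three-piece local decomposition (giving $\|\Delta f\|(b_1+b_2)+\|f\|\,b_3\le b_1+b_2+b_3$), your reading of $\mathcal L(Z)$ as $\mathcal L(W)$, and your factor-$2$ ($\ell_1$) convention for $d_{\mathrm{TV}}$ all fit what the paper relies on, with the lower bound on $\Delta f_A$ supplied by the standard $f_{A^c}=-f_A$ step of the one-point-set argument you defer to.
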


\vspace{10pt}

\begin{proof}[Proof of Theorem~\ref{thm1}]

Define an index set $T_n=\big\{ (i,j):  1\le i<j\le n\big\}$. 
For each $(i,j)\in T_n$,  define $T_{ij}=\{(k,l): 
(k, l)\in T_n, \{i,j\}\cap\{k,l\}\ne \emptyset\}$.

For any fixed $x,y\in \RR$,  define
set $Z_{ij}=\mathbf{I}\big(\{\Theta_{ij}\le \theta_n(-x)\}\cup\{\pi-\Theta_{ij}\le \theta_n(-y)\}\big)$ for $1\le i<j\le n$, where $\mathbf{I}(A)$ denotes the indicator function for any set $A$.  Define  $S_n=\sum_{1\le i<j\le n}Z_{ij}$.

We need to use the the following facts in our proof.

\noindent \textbf{Fact 1}. $\{\Theta_{\alpha}, ~1\le i<j\le n\}$ are pairwise independent;

\noindent \textbf{Fact 2}. $\Theta_{ij}$ and $\{\Theta_{kl}: ~(k,l)\notin T_{ij}\}$ are independent for each $(i,j)\in T_n$.

\noindent \textbf{Fact 3}. $\theta_n(-x)<\pi-\theta_n(-y)$ for all large $n$.

Facts 1 and 2 are from Cai et al.~\cite{CFJ13}  .
Fact 3 follows from Lemma~\ref{lem3} that $0<\theta_n(-x)<\pi/2$ and $0<\theta_n(-y)<\pi/2$ for all large $n$.

We proceed to show that
$S_n$ converges in distribution to a Poisson random variable with mean $\frac{e^{x/2}}{2\sqrt{2\pi}}+\frac{e^{y/2}}{2\sqrt{2\pi}}$
by using Lemma~\ref{lem:Poisson}.

First, we note that  $\{\Theta_{ij}\le \theta_n(-x)\}$ and $\{\pi-\Theta_{ij}\le \theta_n(y)\}$
are disjoint for all large $n$ due to Fact 3. Therefore, we have from Lemma~\ref{lem3} that
\begin{eqnarray*}
&&\PP(\{\Theta_{12}\le \theta_n(-x)\}\cup\{\pi-\Theta_{12}\le \theta_n(-y)\})\\
&=&
\PP(\Theta_{12}\le \theta_n(-x))+\PP(\pi-\Theta_{12}\le \theta_n(-y))\\
&=&\frac{2}{n(n-1)}\big(\frac{e^{-x/2}}{2\sqrt{2\pi}}+\frac{e^{-y/2}}{2\sqrt{2\pi}}\big)\big(1+o(1)\big)
\end{eqnarray*}
and thus
\begin{eqnarray*}
\lambda_n:=\EE(S_n)&=&\frac{n(n-1)}{2}\PP(\{\Theta_{12}\le \theta_n(-x)\}\cup\{\pi-\Theta_{12}\le\theta_n(-y)\})\\
&=&\big(\frac{e^{-x/2}}{2\sqrt{2\pi}}+\frac{e^{-y/2}}{2\sqrt{2\pi}}\big)\big(1+o(1)\big).
\end{eqnarray*}

We will calculate $b_1$, $b_2$ and $b_3$ in Lemma~\ref{lem:Poisson}. We see that 
\[
b_1=\binom{n}{2}(n-1)\Big(\PP(\{\Theta_{12}\le \theta_n(-x)\}\cup\{\pi-\Theta_{12}\le\theta_n(-y)\})\Big)^2=O(\frac1n).
\]
Write $A_{ij}=\{\Theta_{ij}\le \theta_n(-x)\}\cup\{\pi-\Theta_{ij}\le\theta_n(-y)\}$. Then $A_{12}$ and $A_{13}$ are independent from Fact 1. We obtain 
\[
b_2=\binom{n}{2}(n-2)\PP(A_{12}\cap A_{13})
=\binom{n}{2}(n-2)\PP(A_{12})\PP(A_{13})
\le b_1
\]
for all large $n$. We also have from Fact 2 that $b_3=0$.

Therefore,  we conclude from Lemma~\ref{lem:Poisson} that
$d_{\mathrm{TV}}(\mathcal L(S_n),\mathcal L (Z_n))\to 0$ as $n\to\infty$, where $Z_n$ is a Poisson random variable with mean $\lambda_n$.  Since $\lambda_n\to \frac{e^{-x/2}}{2\sqrt{2\pi}}+\frac{e^{-y/2}}{2\sqrt{2\pi}}$, it follows that $S_n$ converges in distribution to a Poisson distribution with mean
$\frac{e^{-x/2}}{2\sqrt{2\pi}}+\frac{e^{-y/2}}{2\sqrt{2\pi}}$.  In particular, it holds that
\[
\lim_{n\to\infty}\PP(S_n=0)=
\exp\Big\{-(\frac{e^{-x/2}}{2\sqrt{2\pi}}+\frac{e^{-y/2}}{2\sqrt{2\pi}})\Big\}=
\exp\Big\{-\frac{e^{-x/2}}{2\sqrt{2\pi}}\Big\}\exp\Big\{-\frac{e^{-y/2}}{2\sqrt{2\pi}}\Big\}.
\]

Finally, noting that
$\{-\widebar\Theta_{\min}\le x, -\widebar\Theta_{\max}\le y\}=\{S_n=0\}$,
we get
\[
\lim_{n\to\infty}\PP(-\widebar\Theta_{\min}\le x, -\widebar\Theta_{\max}\le y)=\exp\Big\{-\frac{e^{-x/2}}{2\sqrt{2\pi}}\Big\}\exp\Big\{-\frac{e^{-y/2}}{2\sqrt{2\pi}}\Big\}
\]
for all $x,y\in \RR$. The right-hand side is the product of two Gumbel distribution functions.  Hence,   $-\widebar\Theta_{\min}$ and $-\widebar\Theta_{\max}$
are asymptotically independent, which implies 
$\widebar\Theta_{\min}$ and $\widebar\Theta_{\max}$
are asymptotically independent as well, and their limits are a reversed Gumbel distribution
$D(x)=1-\exp(-\frac1{2\sqrt{2\pi}}e^{x/2})$, $x\in \RR$.
This completes the proof of Theorem~\ref{thm1}.
\end{proof}

\begin{proof}[Proof of Corollary~\ref{cor1}]

Part (a) of the corollary can be easily obtained by subtracting \eqref{thetamax} from \eqref{thetamin} and using the asymptotic independence of  
$\widebar{\Theta}_{\max}$ and $\widebar{\Theta}_{\min}$.  

To show part (b) of Corollary~\ref{cor1}, we have from \eqref{thetamax} from \eqref{thetamin} that 
\begin{equation}\label{expression1}
\Theta_{\min}-\arcsin(t_n)= \beta_n+\alpha_n\widebar{\Theta}_{\min}
\end{equation}
and
\begin{equation}\label{expression2}
\pi-\Theta_{\max}-\arcsin(t_n)=\beta_n+\alpha_n\widebar{\Theta}_{\max},
\end{equation}
where $\alpha_n$ and $\beta_n$ are defined in \eqref{alpha-beta}.
Now from \eqref{delta} and \eqref{bound} in the proof of Lemma~\ref{lem3} we conclude
\begin{equation}\label{ab-small}
   \alpha_n=o(\beta_n), ~~~\beta_n=o(t_n\sqrt{1-t_n^2}), ~~~\beta_n^2=o(t_n\alpha_n)
\end{equation}
as $n\to\infty$, which together with Theorem~\ref{thm1} imply that the right-hand sides of \eqref{expression1} and \eqref{expression2} converges in probability to zero.   This completes the proof of Corollary~\ref{cor1}. 
    \end{proof}

\begin{proof}[Proof of Theorem~\ref{thm2}]
    First, rewrite \eqref{expression1} and \eqref{expression2} as
\begin{equation}\label{theta1}
\Theta_{\min}= \arcsin(t_n)+ \beta_n+\alpha_n\widebar{\Theta}_{\min}
\end{equation}
and
\begin{equation}\label{theta2}
\Theta_{\max}= \pi-\arcsin(t_n)- \beta_n-\alpha_n\widebar{\Theta}_{\max}.
\end{equation}
Taking advantage of \eqref{theta1} and using Taylor's expansion,  we have
\begin{eqnarray*}
\cos(\Theta_{\min})&=&\cos(\arcsin(t_n)+\beta_n+\alpha_n\widebar{\Theta}_{\min})\\
&=&\cos(\arcsin(t_n))\cos((\beta_n+\alpha_n\widebar{\Theta}_{\min})
-\sin(\arcsin(t_n))\sin((\beta_n+\alpha_n\widebar{\Theta}_{\min}))\\
&=&\sqrt{1-t_n^2}\cos(\beta_n+\alpha_n\widebar{\Theta}_{\min})
-t_n\sin(\beta_n+\alpha_n\widebar{\Theta}_{\min})\\
&=&\sqrt{1-t_n^2}
-t_n(\beta_n+\alpha_n\widebar{\Theta}_{\min})+O_p\big((\beta_n+\alpha_n\widebar{\Theta}_{\min}
)^2\big)\\
&=&\sqrt{1-t_n^2}-t_n\beta_n -t_n\alpha_n\widebar{\Theta}_{\min}+O_p(\alpha_n^2+\beta_n^2)\\
&=&\sqrt{1-t_n^2}-t_n\beta_n -t_n\alpha_n\widebar{\Theta}_{\min}+o_p(t_n\alpha_n)
\end{eqnarray*}
from \eqref{ab-small}, 
which implies
\begin{eqnarray*}
\widebar{C}_{\min}&=&
\frac{1}{\alpha_nt_n}\big(\cos(\Theta_{\min})
-\sqrt{1-t_n^2}\big)+\ln (p_n-1)(1-t_n^2)\\
&=&\frac1{t_n\alpha_n}\Big(\cos(\Theta_{\min})-(\sqrt{1-t_n^2}-t_n\beta_n)\Big)\\
&=&-\widebar{\Theta}_{\min}+o_p(1)\\
&\td& \Lambda.
\end{eqnarray*}
Following the same lines as above, we obtain from \eqref{theta2} that
\begin{eqnarray*}
    \cos(\Theta_{\max})&=&-\cos(\arcsin(t_n)+\beta_n+\alpha_n\widebar{\Theta}_{\max})\\
&=&-\sqrt{1-t_n^2}+t_n\beta_n +t_n\alpha_n\widebar{\Theta}_{\max}+o_p(t_n\alpha_n),
\end{eqnarray*}
yielding
\[
\cos(\Theta_{\max})=\frac{1}{\alpha_nt_n}\big(\cos(\Theta_{\max})
+\sqrt{1-t_n^2}\big)-\ln (p_n-1)(1-t_n^2)
=\widebar{\Theta}_{\max}+o_p(1)\\
\td D.
\]
This completes the proof. 
\end{proof}

\begin{proof}[Proof of Corollary~\ref{cor2}]
Since  $\cos(t)$ is strictly decreasing in $t\in [0, 2\pi]$, and
\[
-1\le \cos(\Theta_{\max})\le X_i^TX_j\le \cos(\Theta_{\min})\le 1
\]
for $1\le i<j\le n$, we conclude  that
\[
L_{n,p}=\max\{|\cos(\Theta_{\min})|, |\cos(\Theta_{\max})|\}.
\]
One can also easily verify that 
\[
\max\{|x|, |y|\}=\max\{x, -y\}~~~ \text{ if } x\ge y,
\]
from which we obtain
\[
L_{n,p}=\max\big\{\cos(\Theta_{\min}), -\cos(\Theta_{\max})\big\}.
\]
and have
\begin{eqnarray*}
&&\frac{1}{\alpha_nt_n}\big(L_{n,p}
-\sqrt{1-t_n^2}\big)+\ln (p_n-1)(1-t_n^2)\\
&=&
\frac{1}{\alpha_nt_n}\Big(\max\big\{\cos(\Theta_{\min}), -\cos(\Theta_{\max})\big\}
-\sqrt{1-t_n^2}\Big)+\ln (p_n-1)(1-t_n^2)\\
&=&\max\{\widebar{C}_{\min}, -\widebar{C}_{\max}\}.
\end{eqnarray*}

Now we apply Theorem~\ref{thm2} and conclude that
$(\widebar{C}_{\min}, \widebar{C}_{\max})$ converges weakly to $(-Z_1, -Z_2)$.
From the continuous mapping theorem, we have
$\max\{\widebar{C}_{\min}, -\widebar{C}_{\max}\}$ converges in distribution to $\max\{-Z_1,-Z_2\}$.  Since the distribution  function for the maximum of two independent random variables with Gumbel distribution $\Lambda$ is equal to $\Lambda_1=\Lambda^2$,  we have
$\max\{\widebar{C}_{\min}, -\widebar{C}_{\max}\}\td \Lambda^2$. 
This completes the proof of  \eqref{limitingLnp}. 

To show \eqref{limitingL-bar}, we will use \eqref{limitingLnp}.
Since $\widebar{L}_{n,p}=L_{n,p}^2$, we have
\begin{eqnarray*}
\widebar{L}_{n,p}-(1-t_n^2)&=&2\sqrt{1-t_n^2}(L_{n,p}-\sqrt{1-t_n^2})+(L_{n,p}-\sqrt{1-t_n^2})^2,
\end{eqnarray*}
and thus
\begin{equation}\label{bridge}
\frac{1}{2\alpha_nt_n\sqrt{1-t_n}}\Big(\widebar{L}_{n,p}-(1-t_n^2)\Big)=\frac1{\alpha_nt_n}(L_{n,p}-\sqrt{1-t_n^2})+
\frac{(L_{n,p}-\sqrt{1-t_n^2})^2}{2\alpha_nt_n\sqrt{1-t_n}}.
\end{equation}
Noting that $2\alpha_nt_n\sqrt{1-t_n}=\frac{t_n^2}{p_n-1}$ and $L_{n,p}-\sqrt{1-t_n^2}=O_p\big(\alpha_nt_n\ln (p_n-1)(1-t_n^2)\big)$ from \eqref{limitingLnp},
we get
\[
\frac{(L_{n,p}-\sqrt{1-t_n^2})^2}{2\alpha_nt_n\sqrt{1-t_n}}=O_p\Big(\frac{\big(\ln (p_n-1)(1-t_n^2)\big)^2}{(p_n-1)(1-t_n^2)}\Big)=o_p(1)
\]
from \eqref{delta} and \eqref{bound}.  By combining \eqref{limitingLnp}, \eqref{bridge} and the identity 
$2\alpha_nt_n\sqrt{1-t_n}=\frac{t_n^2}{p_n-1}$,  we have 
\[
\frac{p_n-1}{t_n^2}\Big(\widebar{L}_{n,p}-(1-t_n^2)\Big)+\ln(p_n-1)(1-t_n^2)=\frac1{\alpha_nt_n}(L_{n,p}-\sqrt{1-t_n^2})+\ln (p_n-1)(1-t_n^2)+
o_p(1)
\]
which converges in distribution to $\Lambda_1$.  The proof of Corollary~\ref{cor2} is completed.
\end{proof}



\end{document}